\providecommand{\U}[1]{\protect\rule{.1in}{.1in}}
\newtheorem{theo}{\setcounter{equation}{1}\setcounter{ctheo}{0}}[section]
\newenvironment{df}{\begin{theo} \bf Definition. \rm}{\end{theo}}
\newenvironment{lem}{\begin{theo} \bf Lemma. \it}{\end{theo}}
\newenvironment{teon}[1]{\begin{theo} \bf Theorem#1. \it}{\end{theo}}
\newenvironment{teo}{\begin{theo} \bf Theorem. \it}{\end{theo}}
\newenvironment{pro}{\begin{theo} \bf  Proposition. \it}{\end{theo}}
\newenvironment{cor}{\begin{theo} \bf Corollary. \it}{\end{theo}}
\begin{document}

\title{Spaces of $\sigma(p)$-nuclear linear and multilinear operators and their duals }
\author{Geraldo Botelho\thanks{Supported by CNPq Grant
305958/2014-3 and Fapemig Grant PPM-00490-15.}~ and Ximena Mujica\thanks{Supported
by a CNPq Postdoctoral scholarship.\thinspace \hfill\newline\indent2010 Mathematics Subject
Classification: 47L22, 46G25, 47B10, 46A20.\newline\indent Key words: $\sigma(p)$-nuclear operators, quasi-$\tau(p)$-summing operators, duality, Borel transform.}}
\date{}
\maketitle

\begin{abstract}
The theory of $\tau$-summing and $\sigma$-nuclear linear operators on Banach spaces was developed by Pietsch \cite[Chapter 23]{pietsch}. Extending the linear case to the range $p >1$ and generalizing all cases to the multilinear setting, in this paper we introduce the concept of $\sigma(p)$-nuclear linear and multilinear operators. In order to develop the duality theory for the spaces of such operators, we introduce the concept of quasi-$\tau(p)$-summing linear/multilinear operators and prove Pietsch-type domination theorems for such operators. The main result of the paper shows that, under usual conditions, linear functionals on the space of $\sigma(p)$-nuclear $n$-linear operators are represented, via the Borel transform, by quasi-$\tau(p)$-summing $n$-linear operators. As far as we know, this result is new even in the linear case $n=1$.
\end{abstract}

\section{Introduction} \setcounter{equation}{0}

Nuclear and absolutely summing linear operators, investigated by A. Grothendieck in the 1950's, turned out to be the germs of several important classes of operators between Banach spaces. These classes play a central role in the theory of operator ideals systematized by A. Pietsch in the 1970's \cite{pietsch}. Among such classes one can find $\sigma$-nuclear and $\tau$-summing operators (see, e.g., \cite[Chapter 23]{pietsch}).
In \cite{xmtDom}  the concept of $\tau$-summing linear operators was generalized to $\tau(p)$-summing linear and multilinear operators for $p \geq 1$. Several types of nuclear linear operators have already been generalized to the multilinear setting (see, e.g., \cite{matos, popa}), and, naturally enough, in Section 2 of this paper we extend the notion of $\sigma$-nuclear linear operator to $\sigma(p)$-nuclear linear and multilinear operators, $p \geq 1$ (cf. Definition \ref{fdef}). For $p > 1$ this concept is new even in the linear case. We prove that the class of $\sigma(p)$-nuclear multilinear operators is a Banach multi-ideal, in particular the class of $\sigma(p)$-nuclear linear operators is a Banach operator ideal. As a preparation for the next section, we prove, under usual conditions on the underlying spaces, a simpler formula for the $\sigma(p)$-nuclear norm of a finite type operator.

In Section 3 we develop the duality theory for the class of $\sigma(p)$-nuclear operators. The standard tool to represent linear functionals on classes of multilinear operators is the Borel transform (see, e.g., Dineen \cite[Chapter 2]{dineen}). Given a class ${\cal M}_1$ of multilinear operators, usually the problem is the identification of the class ${\cal M}_2$ of multilinear operators such that linear functionals on operators belonging to ${\cal M}_1$ are isometrically represented, via the Borel transform, by operators belonging to ${\cal M}_2$. The search for a class of operators that represent bounded linear functionals on the space of $\sigma(p)$-nuclear linear/multilinear operators led us to the introduction of the class of quasi-$\tau(p)$-summing linear/multilinear operators (cf. Definition \ref{sdef}). We prove that such operators enjoy a Pietsch-type domination theorem and the main result asserts that, under usual conditions, the Borel transform is an isometric isomorphism between the dual of the space of $\sigma(p)$-nuclear $n$-linear operators from $E_1 \times \cdots \times E_n$ to $F$ and the space of $\tau(p)$-summing $n$-linear operators from $E_1' \times \cdots \times E_n'$ to $F'$ (cf. Theorem \ref{mth}). We stress that this result is new even in the linear case $n = 1$.

The symbols $D, D_i, E, E_i, F, F_i, G, G_i$ stand for
Banach spaces over $ {\mathbb K} = \mathbb{R}$ or $\mathbb{C}$, $B_E$ is the closed unit ball of $E$ and $E'$ is the (topological) dual of $E$.
For $p \geq 1$, we denote by $p^\prime $ its conjugate, that is, $1 =
\frac{1}{p}+\frac{1}{p^\prime }$. By ${\cal L}(E_1,
\ldots, E_n;F)$ we denote the Banach space of all continuous $n$-linear operators from
$E_1  \times \cdots  \times E_n$ into $F$ endowed with the usual sup norm. Given $n \in \mathbb{N}$, $k \in \mathbb{N}\cup\{\infty\}$,  consider sequences $(\lambda_j)_{j=1}^k $    in $
\mathbb{K}$,  $ (x^\prime_{ij})_{j=1}^k $ in $
  E_i^\prime $ for $i = 1, \ldots, n$, and $(y_{j})_{j=1}^k  $  in $
 F$. If an $n$-linear operator $ A \in {\cal L}(E_1, \ldots, E_n;F)$ is such that
$$A(x_1, \ldots, x_n) = \sum_{j=1}^k \lambda_j x^\prime_{1j}(x_1)\cdots  x^\prime_{nj}(x_n)y_j $$
for all $x_1 \in E_1, \ldots, x_n \in E_n$, then we shall write $$ A =  \sum\limits_{j=1}^k  \lambda_j   x^\prime_{1j}\otimes \cdots \otimes x^\prime_{nj}\otimes y_{j} .$$
If $k \in \mathbb{N}$ we say that $A$ is a {\it finite type operator} and write $A \in  {\cal L}_f(E_1, \ldots, E_n;F)$. The linear case $n=1$ recovers that space of finite rank linear operators. For background on spaces of multilinear operators we refer to \cite{dineen, mujica}.

We shall denote by  ${\cal L}_n$  the class of all continuous $n$-linear operators between Banach spaces. The definition of a Banach ideal of $n$-linear operators can be found in, e.g., \cite{klauspams, klausdomingo}. Instead of the definition, we shall use the following well known characterization:

\begin{teo}{} \label{teo1}
Let ${\cal M}$  be a subclass of ${\cal L}_n$  endowed with a non-negative function  $\|\cdot\|_{\cal M} \colon {\cal M} \longrightarrow \mathbb{R}$. For Banach spaces $E_1, \ldots, E_n,F$, define
$${\cal M}(E_1, \ldots, E_n;F) := {\cal M} \cap {\cal L}(E_1, \ldots, E_n;F). $$
Then $({\cal M}, \|\cdot\|_{\cal M})$ is a Banach ideal of $n$-linear operators if and only if the following conditions hold:

\noindent {\rm (i)} The $n$-linear operator $I_n \colon \mathbb{K}^n \rightarrow \mathbb{K}$, $I_n(\lambda_1, \ldots, \lambda_n)  :=  \lambda_1 \ldots \lambda_n$ , belongs to
 $ {\cal M} $ and $\|I_n\|_{\cal M} = 1$.

\noindent {\rm (ii)} If $S_1,S_2, \ldots \in {\cal M}(  E_1, \ldots, E_n;$ $F)$  and  $ \sum\limits_{k=1}^\infty
\|S_k\|_{\cal M} < \infty$,  then  $S :=  \sum\limits_{k=1}^\infty   S_k \in {\cal M}(  E_1, \ldots,
$ $E_n;$ $F)$  and  $\|S\|_{\cal M} \leq  \sum\limits_{k=1}^\infty  \|S_k\|_{\cal M}$.

\noindent  {\rm (iii)} If $T_i \in {\cal L}(D_i;E_i)$, $i = 1, \ldots,n$, $S\in {\cal M}(  E_1,\ldots,E_n;F)$  and  $R
\in {\cal L}(F;G)$, then $R \circ S \circ (T_1, \ldots,T_n) \in {\cal M}(  D_1,$ $\ldots,$
$D_n;$ $G)$  and  $\|R\circ S \circ (T_1, \ldots, T_n)\|_{\cal M} \leq \|R\|\cdot \|S\|_{\cal M} \cdot \|T_1\|\ldots
\|T_n\|$.
\end{teo}

Every Banach ideal $\cal M$ of $n$-linear operators contains the finite type $n$-linear operators 
$$\|x_1'\otimes \cdots \otimes x_n' \otimes b\|_{\cal M} = \|x_1'\| \cdots \|x_n'\| \cdot \|b\|. $$


\section{ $\sigma(p)$-nuclear $n$-linear operators}\label{spnuc} \setcounter{equation}{0}

By definition (see \cite[Section 23.1]{pietsch}), an operator $u \in {\cal L}(E;F)$ is $\sigma$-nuclear if there are sequences $(x_j')_{j=1}^\infty$ in $E'$ and $(y_j)_{j=1}^\infty$ in $F$ such that $u = \sum\limits_{j=1}^\infty x_j' \otimes y_j$ and the sequence $(x_j' \otimes y_j)_{j=1}^\infty$ is unconditionally summable in ${\cal L}(E;F)$. The following characterization fits our purposes:

\begin{pro}\label{fpro} An operator $u \in {\cal L}(E;F)$ is $\sigma$-nuclear if and only if there are sequences $(\lambda_j)_{j=1}^\infty \in \ell_\infty$, $(x_j')_{j=1}^\infty$ in $E'$ and $(y_j)_{j=1}^\infty$ in $F$ such that $u = \sum\limits_{j=1}^\infty \lambda_jx_j' \otimes y_j$,
$$ \sup_{{x \in B_{E}}\atop {y^\prime \in B_{  F^\prime }}}  \sum_{j=1}^\infty  |x^\prime_{j}(x) y^\prime (y_j)|  <
\infty {\rm ~and~} \lim_{m  \rightarrow \infty}  \sup_{{x \in B_{E}}\atop {y^\prime \in B_{  F^\prime }}}  \sum_{j=m}^\infty  |x^\prime_{j}(x) y^\prime (y_j)| = 0. $$
\end{pro}

\begin{proof} By \cite[Propositions 8.3 and 8.1]{df}, a sequence $(x_j' \otimes y_j)_{j=1}^\infty$ is unconditionally summable in ${\cal L}(E;F)$ if and only if
$$ \sup_{\varphi \in D}  \sum_{j=1}^\infty  |\varphi(x^\prime_{j}\otimes y_j)|  <
\infty {\rm ~and~} \lim_{m  \rightarrow \infty}  \sup_{\varphi \in D}  \sum_{j=m}^\infty  |\varphi(x^\prime_{j}\otimes y_j)| = 0 $$
for some norming set $D \subseteq B_{{\cal L}(E;F)'}$.
For $x \in E$ and $y' \in F'$, the map
$$\varphi_{x,y'}\colon {\cal L}(E;F) \rightarrow \mathbb{K}~,~\varphi_{x,y'}(v) = y'(v(x)), $$
is a bounded linear functional. The (obvious) fact that the set $(\varphi_{x,y'})_{x \in B_E, y' \in B_{F'}} \subseteq B_{{\cal L}(E;F)'}$ is norming completes the proof.
\end{proof}

Now we are in position to generalize the class of $\sigma$-nuclear linear operators to the multilinear setting and to introduce the classes of $\sigma(p)$-nuclear linear and multilinear operators for $p > 1$.

\begin{df}\label{fdef}
{    For $1 \leq p < \infty$, we say that an
  $n$-linear operator
$A\colon  E_1 \times \cdots  \times E_n \rightarrow  F$
  is  {\it ${\sigma(p)}$-nuclear} if there are sequences $(\lambda_j)_{j=1}^\infty \in\ell_{p^\prime }$,  $ (x^\prime_{ij})_{j=1}^\infty $ in
  $ E_i^\prime$, for $i = 1, \ldots, n$, and $(y_{j})_{j=1}^\infty $ in $
 F$, such that $ A =  \sum\limits_{j=1}^\infty  \lambda_j   x^\prime_{1j}\otimes \cdots \otimes x^\prime_{nj}\otimes y_{j} $,

\begin{equation*}\label{sigp1}
 \sup_{{x_i \in B_{E_i}}\atop {y^\prime \in B_{  F^\prime }}} \Bigl( \sum_{j=1}^\infty  |x^\prime_{1j}(x_1)  \cdots   x^\prime_{nj}(x_n)  y^\prime (y_j)|^p\Bigr)^{1/p}  <
\infty   \end{equation*}  and
\begin{equation*}\label{sigp2}   \lim_{m  \rightarrow \infty}  \sup_{{x_i \in B_{E_i}}\atop {y^\prime \in B_{  F^\prime }}} \Bigl( \sum_{j=m}^\infty  |x^\prime_{1j}(x_1)  \cdots
x^\prime_{nj}(x_n)  y^\prime (y_j)|^p\Bigr)^{1/p} = 0.
\end{equation*}
In this case we say that $ A =  \sum\limits_{j=1}^\infty  \lambda_j   x^\prime_{1j} \otimes \cdots \otimes   x^\prime_{nj} \otimes y_{j} $ is a $\sigma(p)$-nuclear representation of $A$ and define
$$\|A\|_{\sigma(p)}{\color{blue}:}= \inf \biggl\{\|(\lambda_j)_{j=1}^\infty\|_{p^\prime }\cdot  \sup_{{x_i \in B_{E_i}}\atop {y^\prime \in B_{  F^\prime }}}  \Bigl( \sum_{j=1}^\infty  |x^\prime_{1j}(x_1)
 \cdots   x^\prime_{nj}(x_n)  y^\prime (y_j)|^p\Bigr)^{1/p}\biggr\}, $$
 where the infimum runs over all $\sigma(p)$-nuclear representations of $A$.
  The set of all such $n$-linear operators is denoted by $ {\cal L}_{\sigma(p)} (E_1,\ldots,E_n;F)$.
}
\end{df}

By Proposition \ref{fpro} the case $n=p=1$ recovers the Banach ideal of $\sigma$-nuclear linear operators from \cite[Section 23.1]{pietsch}.

\begin{pro}$[{\cal L}_{\sigma(p)} $,  $\|\cdot \|_{\sigma(p)}]
$  is  a Banach ideal of $n$-linear operators. In particular, the class of $\sigma(p)$-nuclear linear operators is a Banach operator ideal.
\end{pro}

\begin{proof} Let us prove condition (ii) of Theorem \ref{teo1} (the remaining conditions follow easily). First we remark that routine computations show that
 $ \|\cdot \|  \leq \|\cdot \|_{\sigma(p)}$ on ${\cal L}_{\sigma(p)}$.

Let $A_1, A_2, \ldots \in
 {\cal L}_{\sigma(p)} (E_1,\ldots,E_n;F)$ be such that
 $  \sum\limits_{j=1}^\infty  \| A_k \|_{\sigma(p)} < \infty$. Since $ \|\cdot \|  \leq \|\cdot \|_{\sigma(p)}$, the series
$ \sum\limits_{j=1}^\infty   A_k   $ is absolutely convergent in the Banach space  ${\cal L}(E_1, \ldots, E_n;F)$, therefore     $A:= \sum\limits_{k=1}^\infty A_k $ converges in ${\cal L}(E_1, \ldots, E_n;F)$.  \\

Now we shall see   $ A $ is $\sigma(p)$-nuclear. Given $\varepsilon > 0 $, for each $k$ take a $\sigma(p)$-nuclear representation
$ A_k=  \sum\limits_{j=1}^\infty   \lambda_{kj} {x'_{1kj}}  \otimes   \cdots  \otimes {x'_{nkj}}  \otimes y_{kj}$  such that
\begin{eqnarray*}
 \|( \lambda_{kj})_{j=1}^\infty \|_{p' }
  \leq  \Bigl[(1+   \varepsilon) \|A_k\|_{\sigma(p)}\Bigr]^{1/p' }    {\rm ~and}
 \end{eqnarray*}
 \begin{eqnarray*}
 \sup_{{x_i \in B_{E_i}}\atop {y' \in B_{  F' }}}  \Bigl(  \sum_{j=1}^\infty  |x'_{1kj} (x_1) \cdots  {x'_{nkj}} (x_n)
y'(y_{kj})|^p\Bigr)^{1/p}
 \leq   \Bigl[(1+   \varepsilon)  \|A_k\|_{\sigma(p)} \Bigr]^{1/p}.     \label{Ak}
\end{eqnarray*}
Let us see that $A =\sum\limits_{k=1}^\infty \sum\limits_{j=1}^\infty   \lambda_{kj} {x'_{1kj}}  \otimes   \cdots  \otimes {x'_{nkj}}  \otimes y_{kj} $ is a $\sigma(p)$-nuclear representation of $A$. We have
\begin{align*} \|( \lambda_{kj})_{j,k=1}^\infty \|_{p' }^{p'}  =   \sum_{k=1}^\infty   \sum_{j=1}^\infty
| \lambda_{kj}|^{p' }
          \leq     \sum_{k=1}^\infty  \biggl(  \Bigl[ (1+   \varepsilon)
\|A_k\|_{\sigma(p)} \Bigr]^{1/p' } \biggr)^{p' }
 = \ (1+  \varepsilon)\cdot \sum_{k=1}^\infty    \|A_k\|_{\sigma(p)} < \infty,
\end{align*}
 and
\begin{align*}
\sup_{{x_i \in B_{E_i}}\atop {y' \in B_{  F' }}}    \sum_{k=1}^\infty   \sum_{j=1}^\infty
|{x'_{1kj}} (x_1) \cdots {x'_{nkj}} (x_n) y'(y_{kj})|^p
 \leq  & \sum_{k=1}^\infty    \sup_{{x_i \in B_{E_i}}\atop {y' \in B_{  F' }}}     \sum_{j=1}^\infty  |{x'_{1kj}} (x_1)
\cdots {x'_{nkj}} (x_n) y'(y_{kj})|^{p }     \nonumber  \\
\leq  &   (1+   \varepsilon)\cdot   \sum_{k=1}^\infty    \|A_k\|_{\sigma(p)} < \infty.      \label{lim00}
  \end{align*}
To check the condition concerning the tail of the series, let $\delta > 0$ be given.
Observing that, for each $m \in \mathbb{N}$,
\begin{eqnarray*}
A  & := & \sum_{k=1}^\infty A_k =
     \sum_{k=1}^\infty   \sum_{j=1}^\infty   \lambda_{kj} {x'_{1kj}}  \otimes   \cdots \otimes {x'_{nkj}} \otimes y_{kj}  \\
 & = &  \sum_{k=1}^{m-1} \sum_{j=1}^{m-1} \lambda_{kj} {x'_{1kj}}  \otimes   \cdots \otimes {x'_{nkj}} \otimes y_{kj}  +  \\
&  & +\underbrace{
 \sum_{k=1}^{m-1}  \sum_{j=m }^\infty   \lambda_{kj} {x'_{1kj}}  \otimes   \cdots \otimes {x'_{nkj}} \otimes y_{kj}
    +  \sum_{k=m}^\infty   \sum_{j=1}^\infty   \lambda_{kj} {x'_{1kj}}  \otimes   \cdots \otimes {x'_{nkj}} \otimes y_{kj}
}_{\textrm{tail}  }
\end{eqnarray*}
we have to show that there is $M \in \mathbb{N}$ such that
\begin{eqnarray}
\lefteqn{ \sup_{{x_i \in B_{E_i}}\atop {y' \in B_{  F' }}}   \biggl\{  \Bigl(
  \sum_{k=1}^{m-1}  \sum_{j=m}^\infty    |{x'_{1kj}}(x_1)  \cdots  {x'_{nkj}}(x_n)  y' (y_{kj})|^p   + }
  \hspace{30mm}  \nonumber  \\
  &&  +\sum_{k=m}^\infty   \sum_{j=1}^\infty    |{x'_{1kj}}(x_1)  \cdots  {x'_{nkj}}(x_n)  y' (y_{kj})|^p
   \Bigr)^{1/p}    \biggr\}
    < \delta   \nonumber
    \end{eqnarray}
for every $m \geq M$. By $  \sum\limits_{j=1}^\infty  \| A_k \|_{\sigma(p)} < \infty$,  there exists $K_\delta \in {\mathbb N} $ such that
$  \sum\limits_{k=K_\delta}^\infty        \| A_k \|_{\sigma(p)} < \frac{\delta^p}{2(1+\varepsilon)} $. Hence
\begin{align*}
  \sup_{{x_i \in B_{E_i}}\atop {y' \in B_{  F' }}}    \sum_{k=K_\delta}^\infty   \sum_{j=1}^\infty
|{x'_{1kj}} (x_1) &\cdots {x'_{nkj}} (x_n) y'(y_{kj})|^p
 \leq   \sum_{k=K_\delta}^\infty     \sup_{{x_i \in B_{E_i}}\atop {y' \in B_{  F' }}}     \sum_{j=1}^\infty  |{x'_{1kj}} (x_1)
\cdots {x'_{nkj}} (x_n) y'(y_{kj})|^{p }     \nonumber  \\
     &\leq     (1+   \varepsilon)\cdot      \sum_{k=K_\delta}^\infty     \|A_k\|_{\sigma(p)}    \leq      \frac{\delta^p}{2}   .
 \end{align*}
 For $k = 1, \ldots, K_\delta -1$, since $ \sum\limits_{j=1}^\infty   \lambda_{kj} {x'_{1kj}}  \otimes   \cdots  \otimes {x'_{nkj}}  \otimes y_{kj}$ is a $\sigma(p)$-nuclear representation of $A_k$,
there is $J_k \in \mathbb{N}$ such that
\begin{equation*}\label{lim000}
\sup_{{x_i \in B_{E_i}}\atop {y' \in B_{  F' }}}
  \sum_{j=J_k}^\infty  |x'_{1kj} (x_1) \cdots  {x'_{nkj}} (x_n)y'(y_{kj})|^p
 \ \leq \frac{\delta^p}{2^{k+1}} .
\end{equation*}
 Choosing $M =  \max\{K_\delta , J_1, $
  $ \ldots, J_{K_\delta-1}\}$, we have for $m \geq M$:
  \begin{align*}
  \sup_{{x_i \in B_{E_i}}\atop {y' \in B_{  F' }}}    &    \biggl\{
     \sum_{k=1}^{m-1}   \sum_{j=m}^\infty    |{x'_{1kj}} (x_1) \cdots {x'_{nkj}} (x_n) y'(y_{kj})|^p
+    \sum_{k=m}^\infty   \sum_{j=1}^\infty   |{x'_{1kj}} (x_1) \cdots {x'_{nkj}} (x_n) y'(y_{kj})|^p \biggr\}    \\ 
= & \sup_{{x_i \in B_{E_i}}\atop {y' \in B_{  F' }}}     \biggl\{
 \sum_{k=1}^{M-1}   \sum_{j=m}^\infty  |{x'_{1kj}} (x_1) \cdots {x'_{nkj}} (x_n) y'(y_{kj})|^p     +     \\    
&
     +\sum_{k=M}^{m-1}   \sum_{j=m}^\infty  |{x'_{1kj}} (x_1) \cdots {x'_{nkj}} (x_n) y'(y_{kj})|^p
+
  \sum_{k=m}^\infty   \sum_{j=1}^\infty   |{x'_{1kj}} (x_1) \cdots {x'_{nkj}} (x_n) y'(y_{kj})|^p \biggr\}
\\ 
\leq & \sup_{{x_i \in B_{E_i}}\atop {y' \in B_{  F' }}}     \biggl\{
 \sum_{k=1}^{M-1}   \sum_{j=M}^\infty  |{x'_{1kj}} (x_1) \cdots {x'_{nkj}} (x_n) y'(y_{kj})|^p     +     \\    
&
     +\sum_{k=M}^{m-1}   \sum_{j=1}^\infty  |{x'_{1kj}} (x_1) \cdots {x'_{nkj}} (x_n) y'(y_{kj})|^p
+
  \sum_{k=m}^\infty   \sum_{j=1}^\infty   |{x'_{1kj}} (x_1) \cdots {x'_{nkj}} (x_n) y'(y_{kj})|^p \biggr\}
\\ 
=  & \sup_{{x_i \in B_{E_i}}\atop {y' \in B_{  F' }}}     \biggl\{
 \sum_{k=1}^{M-1}   \sum_{j=M}^\infty  |{x'_{1kj}} (x_1) \cdots {x'_{nkj}} (x_n) y'(y_{kj})|^p     +     \\    
&        \hspace{30mm}
     +\sum_{k=M}^\infty   \sum_{j=1}^\infty   |{x'_{1kj}} (x_1) \cdots {x'_{nkj}} (x_n) y'(y_{kj})|^p \biggr\}    \\ 
\leq & \sup_{{x_i \in B_{E_i}}\atop {y' \in B_{  F' }}}     \biggl\{
 \sum_{k=1}^{K_\delta-1}   \sum_{j=M}^\infty  |{x'_{1kj}} (x_1) \cdots {x'_{nkj}} (x_n) y'(y_{kj})|^p     +    \\    
&
  +   \sum_{k=K_\delta}^{M-1}   \sum_{j=M}^\infty  |{x'_{1kj}} (x_1) \cdots {x'_{nkj}} (x_n) y'(y_{kj})|^p
+    \sum_{k=M}^\infty   \sum_{j=1}^\infty   |{x'_{1kj}} (x_1) \cdots {x'_{nkj}} (x_n) y'(y_{kj})|^p \biggr\}      \\ 
\leq & \sup_{{x_i \in B_{E_i}}\atop {y' \in B_{  F' }}}     \biggl\{
 \sum_{k=1}^{K_\delta-1}   \sum_{j=J_k}^\infty  |{x'_{1kj}} (x_1) \cdots {x'_{nkj}} (x_n) y'(y_{kj})|^p     +     \\    
&     +\sum_{k=K_\delta}^{M-1}   \sum_{j=1}^\infty  |{x'_{1kj}} (x_1) \cdots {x'_{nkj}} (x_n) y'(y_{kj})|^p
+      \sum_{k=M}^\infty   \sum_{j=1}^\infty   |{x'_{1kj}} (x_1) \cdots {x'_{nkj}} (x_n) y'(y_{kj})|^p \biggr\}      \\ 
\leq & \sup_{{x_i \in B_{E_i}}\atop {y' \in B_{  F' }}}     \biggl\{
 \sum_{k=1}^{K_\delta-1}   \sum_{j=J_k}^\infty  |{x'_{1kj}} (x_1) \cdots {x'_{nkj}} (x_n) y'(y_{kj})|^p     +     \\    
& \hspace{40mm}
+ \sum_{k=K_\delta}^\infty \sum_{j=1}^\infty |{x'_{1kj}} (x_1)\cdots {x'_{nkj}} (x_n) y'(y_{kj})|^p \biggr\}    \\ 
\leq &     \sup_{{x_i \in B_{E_i}}\atop {y' \in B_{  F' }}}     \sum_{k=1}^{K_\delta-1}
  \sum_{j=J_k}^\infty  |{x'_{1kj}} (x_1) \cdots {x'_{nkj}} (x_n) y'(y_{kj})|^p         +     \\    
&    \hspace{40mm}
      \sup_{{x_i \in B_{E_i}}\atop {y' \in B_{  F' }}}          \sum_{k=K_\delta}^\infty
     \sum_{j=1}^\infty   |{x'_{1kj}} (x_1) \cdots {x'_{nkj}} (x_n) y'(y_{kj})|^p      \\ 
\leq &    \sum_{k=1}^{K_\delta-1}    \sup_{{x_i \in B_{E_i}}\atop {y' \in B_{  F' }}}
  \sum_{j=J_k}^\infty  |{x'_{1kj}} (x_1) \cdots {x'_{nkj}} (x_n) y'(y_{kj})|^p         +     \\    
&    \hspace{40mm}
     +\sup_{{x_i \in B_{E_i}}\atop {y' \in B_{  F' }}}\sum_{k=K_\delta}^\infty
     \sum_{j=1}^\infty   |{x'_{1kj}} (x_1) \cdots {x'_{nkj}} (x_n) y'(y_{kj})|^p      \\ 
& \leq  \sum_{k=1}^{K_\delta-1}  \frac{\delta^p}{2^{k+1}}  +   \frac{ \delta^p}{2}  <     \delta^p   .  
 \end{align*}

This completes the proof that $A$ is $\sigma(p)$-nuclear. From
\begin{align}
\|( \lambda_{kj})_{j,k=1}^\infty \|_{p^\prime } \cdot
\sup_{{x_i \in B_{E_i}}\atop {y^\prime \in B_{  F^\prime }}}  \Bigl( & \sum_{k=1}^\infty   \sum_{j=1}^\infty  |{x^{\prime }_{1kj}}(x_1) \cdots  {x^{\prime }_{nkj}}(x_n)   y'(y_{kj})|^p \Bigr)^{1/p}    \nonumber \\
& \leq  (1+   \varepsilon)^{1/p^\prime } \cdot \Bigl[  \sum_{k=1}^\infty    \|A_k\|_{\sigma(p)}
\Bigr]^{1/p^\prime } \cdot (1+   \varepsilon)^{1/p} \cdot\Bigl[  \sum_{k=1}^\infty  \|A_k\|_{\sigma(p)}
\Bigr]^{1/p}    \nonumber \\
& =  (1+   \varepsilon)\cdot   \sum_{k=1}^\infty  \|A_k\|_{\sigma(p)},  \nonumber      
  \end{align}
letting $\varepsilon \rightarrow 0$, we conclude that $    \|A \|_{\sigma(p)}  \leq  \sum\limits_{k=1}^\infty  \|A_k\|_{\sigma(p)} . $
   \end{proof}

Once [$ {\cal L}_{\sigma(p)} $,  $\|\cdot\|_{\sigma(p)}]
$  is  a Banach ideal of $n$-linear operators, we have ${\cal L}_f(E_1,
\ldots,E_n;F) \subseteq {\cal L}_{\sigma(p)} (E_1, \ldots,E_n;F)$ and
$$ \| \lambda   x^\prime_1\otimes
\cdots \otimes   x^\prime_n \otimes y \|_{\sigma(p)} = | \lambda  |\cdot \| x^\prime_1\| \cdots
\|x^\prime_n\|\cdot\|y\|,$$
for all $\lambda \in \mathbb{K}, x_j' \in E_j', y \in F$. Using partial sums of $\sigma(p)$-nuclear representations it is easy to see that ${\cal L}_f(E_1,
\ldots,E_n;F)$ is $\|\cdot\|_{\sigma(p)}$-dense in ${\cal L}_{\sigma(p)} (E_1, \ldots,E_n;F)$.
For $A \in {\cal L}_f(E_1,
\ldots,E_n;F)$, define
$$\|A\|_{\sigma(p)f}= \inf \Biggl\{\|(\lambda_j)_{j=1}^m\|_{p^\prime } \cdot \sup_{{x_i \in B_{E_i}}\atop {y^\prime \in B_{  F^\prime }}}  \Bigl( \sum_{j=1}^m  |x^\prime_{1j}(x_1)  \cdots   x^\prime_{nj}(x_n)  y^\prime (y_j)|^p  \Bigr)^{1/p} \Biggr\} $$
where the infimum runs over all finite representations $A = \sum\limits_{j=1}^m \lambda_j\otimes x_{1,j}'\cdots \otimes x_{n,j}'\otimes y_j$.
\noindent  It is easy to check that $\|\cdot\|_{\sigma(p)f}$ is a norm on ${\cal L}_f(E_1, \ldots, E_n;F)$ such that $\|\cdot \| \leq \|\cdot\|_{\sigma(p)f}$. It follows that $\|\cdot\|_{\sigma(p)f}$ is a complete norm on ${\cal L}_f(E_1, \ldots, E_n;F)$. It   is  clear that $\|\cdot\|_{{\sigma(p)}}\leq
\|\cdot\|_{\sigma(p)f}$. For further use, we shall establish conditions under which the equality holds.

\begin{lem}\label{6.4.1} If the norms $ \|\cdot\|_{\sigma(p)}$ and
$\|\cdot\|_{\sigma(p)f} $ are equivalent on ${\cal L}_f(E_1, \ldots,E_n;F)$, then they coincide on this space.
\end{lem}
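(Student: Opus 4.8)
The plan is to prove the reverse inequality $\|A\|_{\sigma(p)f} \leq \|A\|_{\sigma(p)}$ for every $A \in {\cal L}_f(E_1, \ldots, E_n;F)$, since the inequality $\|A\|_{\sigma(p)} \leq \|A\|_{\sigma(p)f}$ has already been recorded above. Equivalence of the two norms furnishes a constant $K > 0$ with $\|B\|_{\sigma(p)f} \leq K \|B\|_{\sigma(p)}$ for all finite type $B$, and this is the only consequence of the hypothesis I will use.

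Fix $A \in {\cal L}_f(E_1, \ldots, E_n;F)$ and $\varepsilon > 0$. First I would choose a $\sigma(p)$-nuclear representation $A = \sum_{j=1}^\infty \lambda_j x'_{1j} \otimes \cdots \otimes x'_{nj} \otimes y_j$ that is nearly optimal for $\|A\|_{\sigma(p)}$, that is, one whose defining product $\|(\lambda_j)_{j=1}^\infty\|_{p'} \cdot \sup_{x_i, y'}\bigl(\sum_{j=1}^\infty |x'_{1j}(x_1)\cdots x'_{nj}(x_n) y'(y_j)|^p\bigr)^{1/p}$ does not exceed $\|A\|_{\sigma(p)} + \varepsilon$. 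Writing $A_m = \sum_{j=1}^m \lambda_j x'_{1j} \otimes \cdots \otimes x'_{nj} \otimes y_j$ for the partial sums, I note that truncating both factors to the first $m$ indices can only decrease them, so this finite representation of $A_m$ witnesses $\|A_m\|_{\sigma(p)f} \leq \|A\|_{\sigma(p)} + \varepsilon$ for every $m$.

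The crucial observation is that, since $A$ is finite type and each $A_m$ is finite type, the remainder $A - A_m$ is again a finite type operator; this is exactly what allows me to feed it to the equivalence hypothesis. The tail estimates built into the definition of a $\sigma(p)$-nuclear representation, namely that both $\|(\lambda_j)_{j>m}\|_{p'}$ and $\sup_{x_i, y'}\bigl(\sum_{j>m} |x'_{1j}(x_1)\cdots x'_{nj}(x_n) y'(y_j)|^p\bigr)^{1/p}$ tend to $0$, give $\|A - A_m\|_{\sigma(p)} \to 0$; this is the content of the density remark preceding the lemma. Hence $\|A - A_m\|_{\sigma(p)f} \leq K \|A - A_m\|_{\sigma(p)} \to 0$, so $A_m \to A$ in the norm $\|\cdot\|_{\sigma(p)f}$ as well.

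Finally, continuity of the norm $\|\cdot\|_{\sigma(p)f}$ yields $\|A\|_{\sigma(p)f} = \lim_m \|A_m\|_{\sigma(p)f} \leq \|A\|_{\sigma(p)} + \varepsilon$, and letting $\varepsilon \to 0$ gives $\|A\|_{\sigma(p)f} \leq \|A\|_{\sigma(p)}$, which completes the argument. I expect the main (and really the only) subtlety to lie in the previous paragraph: the equivalence hypothesis is available \emph{only} for finite type operators, so the whole argument hinges on recognizing that the remainders $A - A_m$ stay inside ${\cal L}_f(E_1, \ldots, E_n;F)$ precisely because $A$ does, which in turn lets the approximation $A_m \to A$ take place simultaneously in both norms.
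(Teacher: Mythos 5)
Your proposal is correct and follows essentially the same route as the paper: take a near-optimal infinite $\sigma(p)$-nuclear representation, bound the partial sums in $\|\cdot\|_{\sigma(p)f}$ by truncation, use the tail condition to get $\|A-A_m\|_{\sigma(p)}\to 0$, and apply the equivalence to the finite-type remainders $A-A_m$ to upgrade this to $\|\cdot\|_{\sigma(p)f}$-convergence. The only cosmetic difference is that the paper fixes one large $m$ and uses the triangle inequality directly rather than passing to the limit, and note that for $p=1$ (so $p'=\infty$) the factor $\|(\lambda_j)_{j>m}\|_{p'}$ need not tend to $0$ — but this is harmless since boundedness of that factor together with the vanishing of the supremum factor already gives $\|A-A_m\|_{\sigma(p)}\to 0$.
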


\begin{proof}
By assumption there is a constant $c> 0$ such that $\|\cdot \|_{\sigma(p)f}\leq c\,
\|\cdot\|_{{\sigma(p)}} $ on ${\cal L}_f(E_1, \ldots,E_n;F)$.
Given  $A \in
{\cal L}_f(E_1,\ldots,E_n;F)$  and  $\varepsilon >0$, take an infinite $\sigma(p)$-nuclear representation

$ A =  \sum\limits_{j=1}^\infty  \lambda_j   x^\prime_{1j}  \otimes   \cdots \otimes   x^\prime_{nj} \otimes y_j$

\noindent   such that
\begin{eqnarray*} \|(\lambda_j)_{j=1}^\infty\|_{p^\prime } \cdot \sup_{{x_i \in B_{E_i}}\atop {y^\prime \in B_{  F^\prime }}}  \Bigl( \sum_{j=1}^\infty
|x^\prime_{1j}(x_1) \cdots   x^\prime_{nj}(x_n)  y^\prime (y_j)|^p\Bigr)^{1/p} \leq
 \Bigl(1+\frac{\varepsilon}{2} \Bigr)\|A\|_{{\sigma(p)}}. \end{eqnarray*}
In particular, for each $m \in  {\mathbb N}$,
\begin{eqnarray*} \Bigl\| \sum_{j=1}^{m-1}\lambda_j   x^\prime_{1j}  \otimes   \cdots \otimes   x^\prime_{nj}   \otimes
y_j \Bigr\|_{\sigma(p)f} & \leq & \|(\lambda_j)_{j=1}^{m-1}\|_{p^\prime } \cdot \sup_{{x_i \in B_{E_i}}\atop {y^\prime \in B_{  F^\prime }}}
\Bigl( \sum_{j=1}^{m-1} |x^\prime_{1j}(x_1) \cdots   x^\prime_{nj}(x_n)
 y^\prime (y_j)|^p\Bigr)^{1/p} \nonumber \\
& \leq &  \Bigl(1+\frac{\varepsilon}{2} \Bigr) \|A\|_{\sigma(p)}. \label{eq:6.4.1c} \end{eqnarray*}
Since
$$\lim_{m  \rightarrow \infty}   \sup_{{x_i \in B_{E_i}}\atop {y^\prime \in B_{  F^\prime }}}  \Bigl(  \sum_{j=m}^\infty  |x^\prime_{1j}(x_1)  \cdots  x^\prime_{nj}(x_n)  y^\prime (y_j)|^p\Bigr)^{1/p} = 0,$$ for a sufficiently large $m \in  {\mathbb N}$ we get
\begin{align*}   \Bigl\|   \sum_{j=m}^\infty  \lambda_j   x^\prime_{1j}  \otimes   \cdots  \otimes     x^\prime_{nj}  \otimes
y_j  \Bigr\| _{\sigma(p)}
& \leq   \|(\lambda_j)_{j=m}^\infty\|_{p^\prime }\cdot
 \sup_{{x_i \in B_{E_i}}\atop {y^\prime \in B_{  F^\prime }}}  \Bigl(  \sum_{j=m}^\infty  |x^\prime_{1j}(x_1)  \ldots  x^\prime_{nj}(x_n)  y^\prime (y_j)|^p\Bigr)^{1/p}  \nonumber \\
& \leq  \|(\lambda_j)_{j=1}^\infty\|_{p^\prime }\cdot
 \sup_{{x_i \in B_{E_i}}\atop {y^\prime \in B_{  F^\prime }}}  \Bigl(  \sum_{j=m}^\infty  |x^\prime_{1j}(x_1)  \ldots  x^\prime_{nj}(x_n)  y^\prime (y_j)|^p\Bigr)^{1/p}  \nonumber \\
& \leq  \frac{\varepsilon}{2c}\|A\|_{\sigma(p)}.\label{eq:6.4.1d} \end{align*}
 It follows that
\begin{eqnarray*}
 \|A\|_{\sigma(p)f}  & \leq  &  \left\|
\sum_{j=1}^{m-1} \lambda_j   x^\prime_{1j} \otimes \cdots \otimes   x^\prime_{nj}   \otimes
y_j\right\|_{\sigma(p)f} +
\left\|  \sum_{j=m}^\infty  \lambda_j   x^\prime_{1j} \otimes \cdots \otimes   x^\prime_{nj} \otimes y_j \right\|_{\sigma(p)f}  \\
& \leq &  \Bigl(1+\frac{\varepsilon}{2} \Bigr) \|A\|_{\sigma(p)}+ c\,
\left\| \sum_{j=m}^\infty  \lambda_j   x^\prime_{1j}  \otimes   \cdots  \otimes     x^\prime_{nj}  \otimes   y_j\right\|_{\sigma(p)} \\
& \leq & \Bigl(1+\frac{\varepsilon}{2} \Bigr) \|A\|_{\sigma(p)}+ \frac{\varepsilon}{2} \|A\|_{\sigma(p)}
\ = \ (1 + \varepsilon ) \|A\|_{\sigma(p)}.
  \end{eqnarray*}
And as this holds for every  $\varepsilon >0$, the result follows.
       \end{proof}

\begin{lem}\label{6.4.2}  If $A \in   {\cal L}_{\sigma(p)} (E_1, \ldots,E_n;F)$  and  $T_k \in
{\cal L}_f(D_k;E_k)$, $k=1,\ldots,n$, then
$$\|A  \circ (T_1,\ldots,T_n)\|_{\sigma(p)f}\leq\|A \|_{{\sigma(p)}} \ \|T_1\|\ldots\|T_n\|. $$
\end{lem}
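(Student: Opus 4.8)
The plan is to exploit the finite rank of the $T_k$ in order to reduce every estimate to \emph{finite} representations, and then to invoke the completeness of $\|\cdot\|_{\sigma(p)f}$ to pass from partial sums to the whole operator. First I would check that $B := A\circ(T_1,\ldots,T_n)$ is genuinely a finite type operator, which is what makes $\|B\|_{\sigma(p)f}$ meaningful. Writing each finite rank map as $T_k=\sum_{i=1}^{r_k} d'_{ki}\otimes e_{ki}$ and expanding by $n$-linearity gives
$$B = \sum_{i_1=1}^{r_1}\cdots\sum_{i_n=1}^{r_n} d'_{1,i_1}\otimes\cdots\otimes d'_{n,i_n}\otimes A(e_{1,i_1},\ldots,e_{n,i_n}),$$
a finite sum, so $B\in{\cal L}_f(D_1,\ldots,D_n;F)$. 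If some $T_k=0$ then $B=0$ and the inequality is trivial, so I assume all $T_k\neq 0$. Then I fix $\varepsilon>0$ and choose a $\sigma(p)$-nuclear representation $A=\sum_{j=1}^\infty \lambda_j\, x'_{1j}\otimes\cdots\otimes x'_{nj}\otimes y_j$ realizing $\|A\|_{\sigma(p)}$ up to the factor $(1+\varepsilon)$.

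Next I would estimate the partial sums. Let $A_m$ be the $m$-th partial sum of the chosen representation and set $B_m := A_m\circ(T_1,\ldots,T_n)=\sum_{j=1}^m \lambda_j\,(x'_{1j}\circ T_1)\otimes\cdots\otimes(x'_{nj}\circ T_n)\otimes y_j$, which is an honest \emph{finite} representation. Feeding it into the definition of $\|\cdot\|_{\sigma(p)f}$ and applying the change of variables $u_k=T_kd_k/\|T_k\|\in B_{E_k}$ for $d_k\in B_{D_k}$ — which factors out $\|T_1\|\cdots\|T_n\|$ and enlarges the supremum from the images of the unit balls to the full balls $B_{E_k}$ — yields, uniformly in $m$,
$$\|B_m\|_{\sigma(p)f}\leq \|T_1\|\cdots\|T_n\|\cdot\|(\lambda_j)_{j=1}^\infty\|_{p'}\cdot \sup_{{x_i\in B_{E_i}}\atop{y'\in B_{F'}}}\Bigl(\sum_{j=1}^\infty |x'_{1j}(x_1)\cdots x'_{nj}(x_n)\,y'(y_j)|^p\Bigr)^{1/p}\leq (1+\varepsilon)\|A\|_{\sigma(p)}\|T_1\|\cdots\|T_n\|.$$

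The step I expect to be the main obstacle is that $\|\cdot\|_{\sigma(p)f}$ is defined only through finite representations, so I cannot simply let $m\to\infty$ inside one representation of $B$; instead I must show that $B_m\to B$ \emph{in the norm} $\|\cdot\|_{\sigma(p)f}$. To this end I would verify that $(B_m)_m$ is $\|\cdot\|_{\sigma(p)f}$-Cauchy: for $m<m'$ the difference $B_{m'}-B_m$ has the finite representation $\sum_{j=m+1}^{m'}$, and the same change of variables gives
$$\|B_{m'}-B_m\|_{\sigma(p)f}\leq \|T_1\|\cdots\|T_n\|\cdot\|(\lambda_j)_{j>m}\|_{p'}\cdot \sup_{{x_i\in B_{E_i}}\atop{y'\in B_{F'}}}\Bigl(\sum_{j=m+1}^\infty |x'_{1j}(x_1)\cdots x'_{nj}(x_n)\,y'(y_j)|^p\Bigr)^{1/p},$$
which tends to $0$ as $m\to\infty$, since $\|(\lambda_j)_{j>m}\|_{p'}$ stays bounded by $\|(\lambda_j)\|_{p'}$ while the tail supremum vanishes by the second condition in Definition \ref{fdef}. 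As $\|\cdot\|_{\sigma(p)f}$ is complete, $(B_m)$ converges in this norm; because $\|\cdot\|\leq\|\cdot\|_{\sigma(p)f}$ and $A_m\to A$ in the sup norm (hence $B_m\to B$ in the sup norm by continuity of composition), the $\|\cdot\|_{\sigma(p)f}$-limit must be $B$. Finally, continuity of the norm together with the uniform bound on $\|B_m\|_{\sigma(p)f}$ gives $\|B\|_{\sigma(p)f}\leq (1+\varepsilon)\|A\|_{\sigma(p)}\|T_1\|\cdots\|T_n\|$, and letting $\varepsilon\to 0$ completes the proof.
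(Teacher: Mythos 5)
Your proof is correct, but it follows a genuinely different route from the paper's. The paper never touches an explicit representation of $A$: it factors each $T_k=J_k\circ\tilde T_k$ through its finite-dimensional range $T_k(D_k)$, observes that on ${\cal L}_f(T_1(D_1),\ldots,T_n(D_n);F)={\cal L}(T_1(D_1),\ldots,T_n(D_n);F)$ both $\|\cdot\|_{\sigma(p)}$ and $\|\cdot\|_{\sigma(p)f}$ are complete, deduces their equivalence from the open mapping theorem and hence their equality from Lemma \ref{6.4.1}, and concludes via $\|A\circ(J_1,\ldots,J_n)\|_{\sigma(p)f}=\|A\circ(J_1,\ldots,J_n)\|_{\sigma(p)}\le\|A\|_{\sigma(p)}$ followed by composition with $(\tilde T_1,\ldots,\tilde T_n)$. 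You instead truncate a near-optimal $\sigma(p)$-nuclear representation of $A$, estimate the resulting finite representations of $B_m=A_m\circ(T_1,\ldots,T_n)$ directly (your rescaling $u_k=T_kd_k/\|T_k\|$ is the right way to extract the factor $\|T_1\|\cdots\|T_n\|$), and pass to the limit; you correctly identify that the only delicate point is converting a bound on the $B_m$ into a bound on $B$, and you resolve it by showing $(B_m)$ is $\|\cdot\|_{\sigma(p)f}$-Cauchy and invoking completeness. What each approach buys: the paper's argument is shorter because it recycles Lemma \ref{6.4.1} and avoids all bookkeeping with tails, at the cost of an open-mapping/equivalence detour; yours is more self-contained and computational, does not use Lemma \ref{6.4.1} at all, and makes the constant $\|T_1\|\cdots\|T_n\|$ appear transparently. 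One point worth flagging: both proofs ultimately lean on the paper's assertion that $\|\cdot\|_{\sigma(p)f}$ is complete on ${\cal L}_f$, but the paper needs it only for finite-dimensional domain spaces (where ${\cal L}_f={\cal L}={\cal L}_{\sigma(p)}$ as sets), whereas your argument invokes it for the arbitrary spaces $D_1,\ldots,D_n$ --- this is exactly the statement the paper makes before Lemma \ref{6.4.1}, so you are entitled to it, but it is the least transparent ingredient of your proof and you should be aware that your argument depends on it in full generality.
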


 \begin{proof} Letting $J_k \colon T_k(D_k)\rightarrow E_k$ be the formal inclusions and $ \tilde{T}_k\colon D_k \rightarrow T_k(D_k)$ be defined by $\tilde{T}_k(u_k) = {T}_k(u_k)$, we can write
  $T_k = J_k \circ \tilde{T}_k$. Since each $T_k(D_k)$ is finite dimensional, we have
  $${\cal L}_f(T_1(D_1), \ldots, T_n(D_n);F) = {\cal L}(T_1(D_1), \ldots, T_n(D_n);F) = {\cal L}_{\sigma(p)}(T_1(D_1), \ldots, T_n(D_n);F),  $$
  so ${\cal L}_f(T_1(D_1), \ldots, T_n(D_n);F)$ is complete with both
  norms $ \|\cdot\|_{\sigma(p)}$ and $ \|\cdot\|_{\sigma(p)f}$. By the inequality $\|\cdot \|_{\sigma(p)} \leq \|\cdot\|_{\sigma(p)f}$  and
the open mapping theorem we conclude that these norms are equivalent on ${\cal L}_f(T_1(D_1), \ldots, T_n(D_n);F)$.  By Lemma \ref{6.4.1} we get
$$\|A \circ (J_1,\ldots,J_n)\|_{\sigma(p)f}  =
\|A \circ (J_1,\ldots,J_n)\|_{{\sigma(p)}} \ \leq \ \|A\|_{{\sigma(p)}}\cdot
 \|J_1\|\cdots\|J_n\| \ = \ \|A\|_{{\sigma(p)}},$$
from which it follows that
\begin{align*} \|A \circ ({T}_1,\ldots,{T}_n)\|_{\sigma(p)f} & =
\|A \circ (J_1,\ldots,J_n) \circ (\tilde{T}_1,\ldots,\tilde{T}_n)\|_{\sigma(p)f} \\
&\leq
\|A \circ (J_1,\ldots,J_n)\|_{\sigma(p)f}\cdot \|\tilde{T}_1\|\cdots\|\tilde{T}_n\|=  \|A\|_{{\sigma(p)}}\cdot\|{T}_1\|\ldots\|{T}_n\|.
 \end{align*}
 \end{proof}

\begin{pro}\label{6.4.3} If $E_1^\prime,   \ldots, E_n^\prime $ have the bounded approximation property, then \linebreak $ \|\cdot\|_{\sigma(p)f} = \|\cdot\|_{{\sigma(p)}} $ on  ${\cal L}_f(E_1, \ldots, E_n;F)$ regardless of the Banach space $F$.
\end{pro}

\begin{proof} We give the proof for $n = 2$, as for other values of $n$ it is similar.  Let $\gamma_i \geq 1$ be such that $E_i^\prime $ has the $\gamma_i$-bounded approximation property for $i = 1,2$. Given $A \in {\cal L}_f(E_1,E_2;F)$, defining $A_1 \colon E_1 \rightarrow {\cal L}(E_2;F)$ and $A_2 \colon E_2 \rightarrow {\cal L}(E_1;F)$ by
$$A_1(x_1)(x_2) = A_2(x_2)(x_1) = A(x_1,x_2), $$   we have $A_1 \in {\cal L}_f(E_1;{\cal L}(E_2;F))$, $A_2 \in {\cal L}_f(E_2;{\cal L}(E_1;F))$ and $\|A_1\| = \|A_2\| =\|A\|$.
Given $ \varepsilon > 0 $, by \cite[Lemma 10.2.6]{pietsch}
there are  $T_i \in {\cal L}_f(E_i;E_i)$, $i = 1,2$, such that
$\|T_i\| \leq (1+\varepsilon )\gamma_i$ and  $A_i \circ T_i = A_i$.
Thus
\begin{align*}A(T_1(x_1), T_2(x_2)) &= [A_1\circ T_1(x_1)](T_2(x_2))= [A_1(x_1)](T_2(x_2))= A(x_1, T_2(x_2))\\
&=[A_2 \circ T_2(x_2)](x_1) = A_2(x_2)(x_1) = A(x_1, x_2)
\end{align*}
for all $x_i \in E_i$, proving that
 $A = A  \circ (T_1,T_2)$. Calling on Lemma \ref{6.4.2} we have
\begin{eqnarray*} \|A\|_{\sigma(p)f}  = \|A  \circ (T_1,T_2)\|_{\sigma(p)f} \leq  \|A\|_{\sigma(p)}\cdot \|T_1\|\cdot\|T_2\| \leq  (1 + \varepsilon )^2 \gamma_1\gamma_2\|A\|_{\sigma(p)}. \end{eqnarray*}

Letting $\varepsilon \rightarrow 0$ we get
$ \|A\|_{\sigma(p)f} \leq \gamma_1\gamma_2\|A\|_{\sigma(p)} $.
The result follows from Lemma \ref{6.4.1}.
\end{proof}


\section{The dual of $ {\cal L}_{\sigma(p)} (E_1,\ldots,E_n;F)$}\label{tpq} \setcounter{equation}{0}

Our aim is to represent bounded linear functionals on the space $ {\cal L}_{\sigma(p)} (E_1,\ldots,E_n;F)$ of $\sigma(p)$-nuclear multilinear operators. Since this space contains the finite type $n$-linear operators, the Borel transform
$$ {\cal B}\colon   [  {\cal L}_{\sigma(p)} (E_1, \ldots,  E_n;F), \|\cdot\|_{\sigma(p)}]'   \longrightarrow   {\cal L}(E^\prime_1, \ldots,  E^\prime_n;F^\prime ), $$
$${\cal B}(\varphi)(x_1', \ldots, x_n')(y) := \varphi(x_1' \otimes \cdots x_n'\otimes y), $$
is a well defined linear operator. The question is to identify the range of $\cal B$ and a norm on it that makes $\cal B$ an isometric isomorphism. The relation proved in \cite[Theorem 23.2.13]{pietsch} draws our attention to the class of $\tau(p)$-summing multilinear operators investigated by the second author in \cite{xmtDom} as a generalization of the class of $\tau$-summing linear operators. As we shall see later, this class works only if the Banach space $F$ is reflexive (cf. Corollary \ref{finalcor}). For the general case we need the following slightly larger class:

\begin{df}\label{sdef}
For  $1 \leq q \leq p$, an $n$-linear operator $S \in {\cal L}(E_1, \ldots, E_n;F^\prime )$   is said to be {\it quasi-$\tau(p;q)$-summing } if there is a constant $  {\sf C}  \geq 0$  such that
\begin{equation*}\label{qtpq} \Bigl( \sum_{j=1}^m | S(x_{1j},\ldots, x_{nj})(y_j)|^p\Bigr)^{1/p} \leq  {\sf C}   \sup_{ {x^\prime_i \in B_{E^\prime_i} } \atop {y^\prime \in B_{  F^\prime  } } } \Bigl( \sum_{j=1}^m  | x^\prime_1(x_{1j}) \ldots   x^\prime_n(x_{nj}) y^\prime (y_j)|^q\Bigr)^{1/q},
\end{equation*}
for all $m\in  {\mathbb N},\ x_{ij}\in E_i,\ y_j\in F,  $
$ i=1,2,\dots,n$, $j=1,2,\dots,m$. The infimum of all such constants $\sf C$ is denoted by $\|S\|_{q\tau(p;q)}$. We denote this class of operators by $ {{\cal L}_{q\tau(p;q)}}(E_1, \ldots,   E_n; F^\prime )$. Routine computations show that  $ [{{\cal L}_{q\tau(p;q)}}(E_1, \ldots,   E_n; F^\prime ), \|\cdot\|_{q\tau(p;q)}] $ is a Banach space.

Whenever $p=q$, we simply write ${\cal L}_{q\tau(p)}$ ,
$\|S\|_{q\tau(p)}$ \label{tp}
and    say $S$  is  quasi-$\tau(p)$-summing. If
 $p=q=1$, we write $  {{{\cal L}_{q\tau}}}  $,  $\|S\|_{q\tau}$ \label{t}
 and    say $S$  is  quasi-$\tau$-summing.  \end{df}

Denoting by $ {{\cal L}_{\tau(p;q)}} (E_1,\ldots,  E_n;
F')$ the space of $\tau(p;q)$-summing $n$-linear operators from \cite{xmtDom}, it is
 straightforward
that
$ {{\cal L}_{\tau(p;q)}} (E_1,\ldots,  E_n; F') \subseteq
{{\cal L}_{q\tau(p;q)}} (E_1,\ldots,  E_n; F')$ with
$\|\cdot\|_{q\tau(p;q)} \leq \|\cdot\|_{\tau(p;q)}$ for every $F$ and that
$ {{\cal L}_{q\tau(p;q)}} (E_1,\ldots,  E_n; F')= {{\cal L}_{\tau(p;q)}} (E_1,\ldots,  E_n; F')$ isometrically for reflexive $F$.


For further use, we show that quasi-$\tau(p)$-summing multilinear operators enjoy  Pietsch-type domination characterizations.

  \begin{teon}{}\label{qtauDom}  Let $1 \leq  p < \infty$. The following are equivalent for an $n$-linear operator
  $S \in {\cal L}(E_1,   \ldots,   E_n;F^\prime )$:\\
{\rm (a)} $S$ is quasi-$\tau(p)$-summing.\\
{\rm (b)} There exist a constant $  {\sf B}   >0$  and  a regular Borel probability measure $\mu$ on $B_{ E^\prime _1} \times \cdots  \times
B_{ E^\prime _n}  \times B_{F'}$ endowed with the product of the weak-star topologies,   such that
\begin{eqnarray*}\label{qtaudomb}
 | S(x_1, \ldots,   x_n)(y)| \leq  {\sf B}  \biggl(\int_{B_{ E^\prime_1  } \times \cdots     \times B_{ E^\prime_n  } \times B_{F^\prime }}
            | x^\prime_1(x_1) \ldots   x^\prime_n(x_n)  y^\prime (y)|^p
d\mu ( x^\prime_1, \ldots 	,  x^\prime_n, y^\prime )\biggr)^{1/p}, \nonumber
\end{eqnarray*}
\noindent  for all  $ x_i \in E_i$  and   $y \in F$.\\
{\rm (c)} There exist a constant $ {\sf C}
> 0$ and regular Borel probability measures $\mu_i$ on $B_{E^\prime_i}$, $i = 1, \ldots, n$, $\mu_{n+1}$ on $B_{F^\prime}$
 such that
 \begin{eqnarray*}\label{qtaudomc}
| (S(x_1, \ldots , x_n)(y)| \leq  {\sf C}\cdot \prod_{k=1}^n \left( \int_{B_{E^\prime_k}}|x_k'(x_k)|^p d\mu_k (x^\prime_k)\right)^{1/p}\cdot \left(\int_{B_{F^\prime}}|y'(y)|^p d\mu_{n+1}(y^\prime) \right)^{1/p},
\end{eqnarray*}
\noindent   for all  $ x_i \in E_i$ and   $y \in F$.\\
\indent In this case,
$\|S\|_{\tau(p)} = \inf  {\sf B}  = \inf  {\sf C} $.
\end{teon}

\begin{proof} (c) $\Rightarrow$ (b) Just take the product measure $\mu := \mu_1 \otimes \cdots \otimes \mu_{n+1}$.\\
(b) $\Rightarrow$ (a) Taking finite sums in the inequality in (b) to the power $p$, it follows easily that $S$ is quasi-$\tau(p)$-summing.\\
(a) $\Rightarrow$ (c) The proof is analogous to the proof of \cite[Theorem 2.6]{xmtDom}, we shall only sketch the main steps. For a Banach space $E$, by $W(B_{E^\prime})$ we denote the (compact) set of regular Borel probability measures on $B_{E'}$ endowed with the  weak* topology of $C(B_{E'})'$.
Given $x_{i1},\ldots,x_{im}\in E_i$, $i=1,\ldots,n$ and
$y_1,\ldots,y_m\in F $, the function $\phi \colon W(B_{E_1^\prime})\times \cdots \times W(B_{E_n^\prime}) \times W(B_{F^\prime}) \longrightarrow \mathbb{R}^+$ defined by
 \begin{eqnarray*} \lefteqn{\phi(\mu_1, \ldots, \mu_n, \mu_{n+1}) :=   \sum_{j=1}^m
\biggl\{| S(x_{1j}, \ldots ,x_{nj})(y_j)|^p - }\\
& &  \hspace{23mm}  {\sf C}^p  \cdot \prod_{k=1}^n \left(\int_{B_{E^\prime_k}}|x_k'(x_{kj})|^p d\mu_k (x^\prime_k)\right)\cdot \int_{B_{F^\prime}}|y'(y_j)|^p d\mu_{n+1}(y^\prime) \biggr\}
 \end{eqnarray*}
is continuous and convex.
By compactness we can choose  $x^\prime_{10}\in B_{E_1^\prime}, \ldots, $ $ \ x^\prime_{n0}\in
B_{E_n^\prime}$ and $ y^\prime_0 \in B_{F^\prime}$ such that
$$ \sup\biggl\{ \sum_{j=1}^m  |x^\prime_1(x_{1j})  \ldots  x^\prime_n(x_{nj})  y^\prime_n( y_j ) |^p:\| x^\prime_i \|, \| y^\prime \| \leq 1 \biggr\} =
 \sum_{j=1}^m  |x^\prime_{10}(x_{1j})  \ldots x^\prime_{n0}(x_{nj})  y^\prime_0( y_j)|^p. $$
Let $\delta_1(x^\prime_{10}),\ldots,
\delta_n(x^\prime_{n0}),\delta_{n+1}( y^\prime_0)$ be the Dirac measures at
$ x^\prime_{10},\ldots, x^\prime_{n0} , y^\prime_0 $ respectively. Since $S$ is quasi-$\tau(p)$-summing, we have
\begin{eqnarray*}
\lefteqn{\phi \left(\delta_1( x^\prime_{10}) , \ldots ,
\delta_n( x^\prime_{n0}) , \delta_{n+1}( y^\prime_0)\right)=  }  \\
& & \sum_{j=1}^m  | S( x_{1j} , \ldots , x_{nj} )(y_j) |^p -
{\sf C}^p  | x^\prime_{10}(x_{1j})  \cdots  x^\prime_{n0}(x_{nj})  y^\prime_0( y_j) |^p \leq 0 .
\end{eqnarray*}
Since the collection  ${\cal F}$  of all such functions $\phi$ is  concave, by Ky Fan's Lemma \cite[Lemma E.4.2]{pietsch} there exists an element
 $(\mu_{1} , \ldots , \mu_{n} , \mu_{n+1})
\in
 W(B_{E_1^\prime})\times \cdots \times W(B_{E_n^\prime}) \times W(B_{F^\prime})$  such that
$\phi ( \mu_{1} , \ldots , \mu_{n} , \mu_{n+1} ) \leq
0$ for all $\phi \in {\cal F}$. In particular, given $x_1 \in E_1, \ldots,  x_n \in E_n, y \in F $ and $y \in F$, consider the function $\phi$ associated to $x_1, \ldots, x_n, y$ (that is, $m = 1$), to get the desired inequality.
\end{proof}

It is noteworthy that, although (b) is apparently weaker than (c), these conditions are actually equivalent. On the one hand, condition (b) is the analogue of \cite[Theorem 23.1.6]{pietsch} (a direct short proof of it follows from the results of \cite{bprMona, bprJMAA}); on the other hand, it is condition (c) that  next shall be useful.

The class of $\tau$-summing linear operators ($\tau(1)$-summing operators in our terminology) is rather small in the sense that it is contained in the other classes of summing-type linear operators \cite[Proposition 23.1.5]{pietsch}. Next we compare the class of quasi-$\tau(p;q)$-summing operators with other types of summing multilinear operators, and in particular we show that, though formally larger than the class of $\tau(p)$-summing multilinear operators, the class of quasi-$\tau(p)$-summing operators is still rather small. For the classes ${\cal L}_{as(p;q_1, \ldots, q_n)}$ of absolutely $(p;q_1, \ldots, q_n)$-summing multilinear operators, ${\cal L}_{d,p}$ of $p$-dominated multilinear operators and ${\cal L}_{si(p)}$ of $p$-semi-integral multilinear operators, see, e.g. \cite{daniel/erhan, david}.

\begin{pro} {\rm (a)} If $\frac{1}{q}
\leq \frac{1}{q_1} + \cdots +\frac{1}{q_n}+ +\frac{1}{q_{n+1}}$ and $S \in   {{\cal L}_{q\tau(p;q)}}(E_1, \ldots,   E_n; F^\prime )$, then the $(n+1)$-linear operator
$$S_F \colon E_1 \times \cdots \times   E_n \times F \rightarrow {\mathbb K}~,~
 S_F(x_{1},\ldots, x_{n},y)=   S(x_{1},\ldots, x_{n})(y), $$
is absolutely $(p;q_1, \ldots, q_n, q_{n+1})$-summing.\\
{\rm (b)} The following hold for spaces of multilinear operators taking values in dual spaces:
$${\cal L}_{q\tau(p)} \subseteq {\cal L}_{d,p} \subseteq {\cal L}_{si(p)} \subseteq {\cal L}_{as(p;p, \ldots, p)}.$$
In particular, quasi-$\tau(p)$-summing linear operators are absolutely $p$-summing.
\end{pro}

 \begin{proof} (a)
The result follows from  H\"older's inequality:
\begin{eqnarray*}  \lefteqn{\hspace{-1mm}\Bigl( \sum_{j=1}^m |S_F(x_{1_j},\ldots, x_{n_j},y_j)|^p\Bigr)^{1/p}
 =  \Bigl( \sum_{j=1}^m | S(x_{1_j},\ldots, x_{n_j})(y_j )|^p\Bigr)^{1/p}} \\
&\leq &   \| S \|_{q\tau(p;q)} \cdot \sup_{{ x^\prime_i\in B_{E^\prime_i} }\atop{ y^\prime_i\in B_{F^\prime_i} } }
\Bigl( \sum_{j=1}^m | x^\prime_1(x_{1j}) \ldots   x^\prime_n(x_{nj}) y^\prime (y_j) |^q \Bigr)^{1/q} \\
& \leq  & \| S \|_{q\tau(p;q)}\cdot
\sup_{{ x^\prime_i\in B_{E^\prime_i} }\atop{ y^\prime_i\in B_{F^\prime_i} } }
\Bigl( \sum_{j=1}^m | x^\prime_1(x_{1j})^{q_1}\Bigr)^{1/q_1} \ldots
\Bigl( \sum_{j=1}^m |x^\prime_n(x_{nj})|^{q_n}\Bigr)^{1/q_n}
\Bigl( \sum_{j=1}^m |y^\prime_j(y_j)|^{q_{n+1}}\Bigr)^{1/{q_{n+1}}} .
 \end{eqnarray*}

\noindent (b) Let $S \in {\cal L}_{q\tau(p)}(E_1, \ldots, E_n;F)$ and let $\mu_1, \ldots, \mu_{n+1}$ be the corresponding measures given by Theorem \ref{qtauDom}(c). Hence,
\begin{align*}
 \| S(x_1, &\ldots,   x_n)\| =   \sup_{ {y \in B_{F}}}     | S(x_1, \ldots,   x_n)(y)|  \\ & \leq \| S \|_{q\tau(p)} \cdot\sup_{ {y \in B_{F}}}
    \prod_{k=1}^n \left(\int_{B_{E^\prime_k}}|x_k'(x_k)|^p d\mu_k (x^\prime_k)\right)^{1/p}\cdot \left(\int_{B_{F^\prime}}|y'(y)|^p d\mu_{n+1}(y^\prime) \right)^{1/p}\\ & \leq \| S \|_{q\tau(p)} \cdot\sup_{ {y \in B_{F}}}
    \prod_{k=1}^n \left(\int_{B_{E^\prime_k}}|x_k'(x_k)|^p d\mu_k (x^\prime_k)\right)^{1/p}\cdot \left(\int_{B_{F^\prime}}\|y'\|^p\cdot \|y\|^p d\mu_{n+1}(y^\prime) \right)^{1/p}\\
     & = \| S \|_{q\tau(p)} \cdot
    \prod_{k=1}^n \left(\int_{B_{E^\prime_k}}|x_k'(x_k)|^p d\mu_k (x^\prime_k)\right)^{1/p},\end{align*}
which is the characterization of $p$-dominated operators by means of a Pietsch-type domination that goes back to \cite{pietsch93} (see also \cite[Theorem 3.2(E)]{bprMona}). This proves the first inclusion. The remaining inclusions can be found in  \cite[Theorem 3]{daniel/erhan}.
\end{proof}

A result analogous to the item (a) above was presented in \cite[Remark 3.3]{xmtDom}  relating $\tau(p)$-summing operators and   absolutely  $(p;q_1,\ldots,$ $q_n)$-summing operators, and a result analogous to part of item (b) above was presented in \cite[Remark 5.3]{xmtDom}  relating  $\tau(p)$-summing operators and $p$-semi-integral operators.


    Now we proceed to show that bounded linear functionals on the space of $\sigma(p)$-nuclear multilinear operators are represented by quasi-$\tau(p)$-summing multilinear operators via the Borel transform.

Let us justify an equality we shall use soon: iterating the well known equality
$$ \sup_{ x^{\prime\prime} \in B_{E^{\prime\prime}}} \Bigl( \sum_{j=1}^m |x^{\prime\prime}(x^\prime_j)| \Bigr)^{1/p} =  \sup_{ x \in B_{E}}  \Bigl( \sum_{j=1}^m |x^\prime_j(x)| \Bigr)^{1/p},  $$
for $x_1', \ldots, x_m' \in E'$,
it follows that
$$\sup_{{x^{\prime\prime}_i \in B_{E^{\prime\prime}_i}}\atop {y^\prime \in B_{  F^\prime }}}  \Bigl(  \sum_{j=1}^m   \bigl|     x^{\prime\prime}_1(x^\prime_{1j}) \cdots
   x^{\prime\prime}_n(x^\prime_{nj})   y^\prime ( y_{j})   \bigr|^p\Big)^{1/p} =  \sup_{{x_i \in B_{E_i}}\atop {y^\prime \in B_{  F^\prime }}}  \Bigl( \sum_{j=1}^m
|x^\prime_{1j}(x_1)  \cdots   x^\prime_{nj}(x_n) y^\prime ( y_{j}) |^p\Bigr)^{1/p},$$
for $x_{ij}' \in E_i'$ and $y_j \in F$.

\begin{teo}\label{mth} If $E_1^\prime,\ldots,E_n^\prime $ have the bounded approximation property,  then the spaces $[{\cal L}_{\sigma(p)} (E_1, \ldots,  E_n;F)]^\prime$  and $ {\cal L}_{q\tau(p)}(E_1^\prime,   \ldots,  E_n^\prime ;F^\prime )   $
 are  isometrically isomorphic via the Borel transform, regardless of the Banach space $F$. In particular, if $E'$ has the bounded approximation property, then $[{\cal L}_{\sigma(p)}(E;F)]' \stackrel{1}{=} {\cal L}_{q\tau(p)}(E';F')$.
 \end{teo}

\begin{proof}
Given $ \varphi \in [  {\cal L}_{\sigma(p)} (E_1, \ldots,  E_n;F)]^\prime $, let us denote ${\cal B}(\varphi)$ by $ S_\varphi $.

In order to prove that  $ S_\varphi  \in    {{ {\cal L}_{q\tau}}}  _{(p)}(E_1^\prime,   \ldots,  E_n^\prime ;F^\prime )$, let $m \in  {\mathbb N}$,  $ x^\prime_{i1},
\ldots,   x^\prime_{im} \in E_i^\prime $,  $y_1, \ldots, y_m \in F$,    $ \ i=1,\ldots,n$, be given. By duality $(\ell_p^m)' = \ell_{p'}^m$ and the Hahn-Banach Theorem, there are scalars $\varepsilon_1, \ldots, \varepsilon_m$ such that $\| (\varepsilon_j)_{j=1}^m \|_{p^\prime } = 1$ and
$$\Bigl( \sum_{j=1}^m   \bigl| \varphi(x^\prime_{1j}  \otimes   \cdots \otimes   x^\prime_{nj} \otimes y_{j})  \bigr|^p\Bigr)^{1/p} = \Bigl| \sum_{j=1}^m  \varepsilon_j  \varphi(x^\prime_{1j}  \otimes   \cdots
  \otimes     x^\prime_{nj} \otimes y_{j})  \Bigr|. $$
So,
\begin{align*} \Bigl(  \sum_{j=1}^m  \bigl|  S_\varphi (x^\prime_{1j}, \ldots,     x^\prime_{nj}) (y_j)  \bigr|
^p\Bigr)&^{1/p}      =      \Bigl( \sum_{j=1}^m   \bigl| \varphi(x^\prime_{1j}  \otimes   \cdots \otimes   x^\prime_{nj} \otimes y_{j})  \bigr|^p\Bigr)^{1/p}\\
 &
 =   \Bigl| \sum_{j=1}^m  \varepsilon_j  \varphi(x^\prime_{1j}  \otimes   \cdots
  \otimes     x^\prime_{nj} \otimes y_{j})  \Bigr| \\
& =   \Bigl|  \varphi \Bigl(  \sum_{j=1}^m  \varepsilon_j   x^\prime_{1j}  \otimes   \cdots
  \otimes     x^\prime_{nj} \otimes  y_{j}  \Bigr)  \Bigr| \\
& \leq  \| \varphi\| \cdot  \Bigl\|
 \sum_{j=1}^m   \varepsilon_j   x^\prime_{1j}  \otimes   \cdots
  \otimes     x^\prime_{nj} \otimes  y_{j}   \Bigr\|_{{\sigma(p)}}\\
& \leq   \|  \varphi\| \cdot \|(\varepsilon_j)_{j=1}^m \|_{p^\prime }\cdot
  \sup_{{x_i \in B_{E_i}}\atop {y^\prime \in B_{  F^\prime }}}  \Bigl( \sum_{j=1}^m
|x^\prime_{1j}(x_1)  \cdots   x^\prime_{nj}(x_n) y^\prime ( y_{j}) |^p\Bigr)^{1/p}\\
 & =   \| \varphi\| \cdot \sup_{{x^{\prime\prime}_i \in B_{E^{\prime\prime}_i}}\atop {y^\prime \in B_{  F^\prime }}}  \Bigl(  \sum_{j=1}^m   \bigl|     x^{\prime\prime}_1(x^\prime_{1j}) \cdots
   x^{\prime\prime}_n(x^\prime_{nj})   y^\prime ( y_{j})   \bigr|^p\Big)^{1/p},
 \end{align*}
 proving that $ S_\varphi $  is  quasi-$\tau(p)$-summing  and
$\| S_\varphi \|_{q\tau(p)} \leq \| \varphi\|$.

 Conversely, given $S \in {\cal L}_{q\tau(p)}(E^\prime_1, \ldots, E^\prime_n;F^\prime)$,   define
  $$ T_S\colon   E^\prime_1 \times \cdots \times E^\prime_n \times F     \longrightarrow   {\mathbb K}
, \ T_S( x^\prime_1 , \ldots , x^\prime_n , y ) :=  S ( x^\prime_1 , \ldots , x^\prime_n ) (y ) .$$
 It is plain that $T_S$ is $(n+1)$-linear, so, having in mind that $E^\prime_1 \otimes \cdots \otimes E^\prime_n   \otimes  F = {\cal L}_f ( E_1, \ldots , E_n ; F  )$,  by the universal property of the tensor product there exists a linear operator
${\cal T_S}\colon   {\cal L}_f ( E_1, \ldots , E_n ; F  )   \longrightarrow  {\mathbb K} $ such that
$${\cal T_S}( x^\prime_1\otimes \cdots \otimes x^\prime_n \otimes y ) =   T_S( x^\prime_1 , \ldots , x^\prime_n , y ) =  S ( x^\prime_1 , \ldots , x^\prime_n ) (y ) ,$$
for all $x_1' \in E_1', \ldots, x_n' \in E_n', y \in F$.
Now we shall prove  that $ {\cal T_S} $ is continuous with respect to the norm $\|\cdot\|_{\sigma(p)}$. Given $\varepsilon > 0$ and $A \in {\cal L}_f ( E_1, \ldots , E_n ; F  )$, by definition of the norm $\|\cdot\|_{\sigma(p)f}$ we can choose a representation $A =  \sum\limits_{j=1}^m  \lambda_j   x^\prime_{1j}  \otimes   \cdots \otimes   x^\prime_{nj}  \otimes   y_j $ such that
$$ \| \bigl( \lambda_j \bigr)_{j=1}^m \|_{p^\prime} \cdot  \sup_{{x_i \in B_{E_i}}\atop {y^\prime \in B_{  F^\prime }}}    \Bigl( \sum_{j=1}^m
|x^\prime_{1j}(x_1)\cdots. x^\prime_{nj}(x_n)  y^\prime (y_j)|^p\Bigr)^{1/p}  \leq (1 + \varepsilon) \| A \|_{\sigma(p)f}.
$$
Therefore,
\begin{eqnarray*}
  | {\cal T_S} (A) |
   & = &  \Bigl|   {\cal T_S}    \Bigl(  \sum\limits_{j=1}^m \lambda_j x^\prime_{1j}\otimes\cdots\otimes x^\prime_{nj}\otimes y_j     \Bigr)   \Bigr|  =  \Bigl|    \sum\limits_{j=1}^m \lambda_j   S    \bigl( x^\prime_{1j} , \ldots , x^\prime_{nj}  \bigr)  ( y_j  )  \Bigr|  \\
    & \leq & \|(\lambda_j)_{j=1}^m\|_{p^\prime } \cdot  \Bigl( \sum_{j=1}^m   |  S(x^\prime_{1j},  \ldots,     x^\prime_{nj})(y_j)  |^p\Bigr)^{1/p}\\
          & \leq & \|(\lambda_j)_{j=1}^m\|_{p^\prime } \cdot \|S\|_{q\tau(p)} \cdot \sup_{{x^{\prime\prime}_i \in B_{E^{\prime\prime}_i}}\atop {y^\prime \in B_{  F^\prime }}}
\Bigl(   \sum\limits_{j=1}^m  | x^{\prime\prime}_1(x^\prime_{1j})  \cdots .x^{\prime\prime}_n(x^\prime_{nj})  y^\prime (y_j) |^p \Bigr)^{1/p}\\
  & = &   \|S\|_{q\tau(p)}\cdot \|(\lambda_j)_{j=1}^m\|_{p^\prime } \cdot \sup_{{x_i \in B_{E_i}}\atop {y^\prime \in B_{  F^\prime }}}
\Bigl(  \sum\limits_{j=1}^m   |x^\prime_{1j}(x_1)\cdots x^\prime_{nj}(x_n)  y^\prime (y_j) |^p \Bigr)^{1/p}\\
& \leq &    \|S\|_{q\tau(p)}  (1 + \varepsilon) \| A \|_{\sigma(p)f}.
 \end{eqnarray*}
As this holds for arbitrary $\varepsilon > 0$ and the spaces $E_1', \ldots, E_n'$ have the bounded approximation property, by Proposition \ref{6.4.3} we conclude that
$$ | {\cal T_S}  ( A) |   \leq \|S\|_{q\tau(p)}
\cdot \|A\|_{{\sigma(p)f}} = \|S\|_{q\tau(p)}
\cdot \|A\|_{{\sigma(p)}}.$$
So, ${\cal T_S} \in [{\cal L}_f ( E_1, \ldots , E_n ; F  ), \|\cdot\|_{\sigma(p)}]'$ and
$\| {\cal T_S} \|  \leq \|S\|_{q\tau(p)}$.
As ${\cal L}_f (E_1, \ldots,  E_n;F)$ is $\|\cdot\|_{\sigma(p)}$-dense in ${\cal L}_{\sigma(p)} (E_1, \ldots,  E_n;F)$ , there is a unique norm-preserving continuous linear extension $\varphi_S$ of $\cal T_S$ to the whole of ${\cal L}_{\sigma(p)} (E_1, \ldots,  E_n;F)$.  In particular, $\| \varphi_S \|  \leq \|S\|_{q\tau(p)}$ and for
  $A =  \sum\limits_{j=1}^\infty  \lambda_j   x^\prime_{1j} \otimes \cdots \otimes   x^\prime_{nj}  \otimes   y_j   \in   {\cal L}_{\sigma(p)} (E_1,\ldots, E_n; F)$,
\begin{align*}
 \varphi_S (A)
 & =    \varphi_S \Bigl( \sum_{j=1}^\infty  \lambda_j   x^\prime_{1j}  \otimes   \cdots \otimes   x^\prime_{nj}  \otimes   y_j  \Bigr)    =   \sum_{j=1}^\infty  \lambda_j   \varphi_S (  x^\prime_{1j}  \otimes   \cdots \otimes   x^\prime_{nj}  \otimes   y_j)   \\
       & =    \sum_{j=1}^\infty  \lambda_j   {\cal T_S} (  x^\prime_{1j}  \otimes   \cdots \otimes   x^\prime_{nj}  \otimes   y_j)
 =    \sum_{j=1}^\infty  \lambda_j  S(  x^\prime_{1j},  \ldots , x^\prime_{nj} )(  y_j)   .
 \end{align*}
From the expression above it follows easily that the correspondences $ \varphi \mapsto  S_\varphi $  and  $S \mapsto  \varphi_S $ are each other's inverse in the sense that $\varphi_{S_\varphi} = \varphi$ and $S_{\varphi_S} = S$ for $ \varphi\in [  {\cal L}_{\sigma(p)} (E_1, $ $ \ldots, E_n;F)]^\prime $ and $S \in
{{ {\cal L}_{q\tau(p)}}}(E_1^\prime, \ldots, $ $E_n^\prime;F^\prime )$.
The equality  $\| S_\varphi \|_{q\tau(p)} = \| \varphi\|$ completes the proof.
\end{proof}

For reflexive target spaces we have:

\begin{cor}\label{finalcor} If  $E_1^\prime,\ldots,E_n^\prime $  have the
bounded approximation property and  $F$ is reflexive, then the spaces
$ [  {\cal L}_{\sigma(p)} (E_1, \ldots,  E_n;F)]^\prime$ and  ${\cal L}_{\tau(p)}(E_1^\prime,   \ldots,  E_n^\prime ;F^\prime ) $
are isometrically isomorphic via the Borel transform. In particular, if $E'$ has the bounded approximation property and $F$ is reflexive, then $[{\cal L}_{\sigma(p)}(E;F)]' \stackrel{1}{=} {\cal L}_{\tau(p)}(E';F')$.\end{cor}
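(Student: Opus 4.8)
The plan is to obtain this statement as an immediate consequence of Theorem \ref{mth} combined with the isometric coincidence of the quasi-$\tau(p)$-summing and $\tau(p)$-summing classes for reflexive target spaces, which is recorded in the remark following Definition \ref{sdef}. So the whole argument amounts to composing two isometric identifications that are already in place.

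First I would invoke Theorem \ref{mth}. Since $E_1', \ldots, E_n'$ have the bounded approximation property, that theorem asserts (with no restriction on $F$) that the Borel transform ${\cal B}$ is an isometric isomorphism from $[{\cal L}_{\sigma(p)}(E_1, \ldots, E_n;F)]'$ onto ${\cal L}_{q\tau(p)}(E_1', \ldots, E_n';F')$. Next I would identify this target space with ${\cal L}_{\tau(p)}(E_1', \ldots, E_n';F')$. The point is that the operators representing the functionals act from $E_1' \times \cdots \times E_n'$ into $F'$, that is, their values lie in the dual of $F$; hence the remark after Definition \ref{sdef}, applied with the $n$ domain spaces taken to be $E_1', \ldots, E_n'$ and with target $F'$, applies precisely because $F$ is reflexive. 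It yields ${\cal L}_{q\tau(p)}(E_1', \ldots, E_n';F') = {\cal L}_{\tau(p)}(E_1', \ldots, E_n';F')$ isometrically. The underlying reason, already deemed straightforward in the text, is that reflexivity makes the canonical images of $B_F$ and $B_{F''}$ coincide, so that the supremum-defining condition of Definition \ref{sdef} agrees with the corresponding condition for $\tau(p)$-summing operators from \cite{xmtDom}.

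Composing these two isometric identifications shows that ${\cal B}$ is itself an isometric isomorphism from $[{\cal L}_{\sigma(p)}(E_1, \ldots, E_n;F)]'$ onto ${\cal L}_{\tau(p)}(E_1', \ldots, E_n';F')$, and the ``in particular'' assertion is just the case $n=1$. Since both ingredients are already established, I do not expect a genuine obstacle here; the only point requiring care is to match the roles of the spaces correctly, as the bounded approximation property is imposed on the duals $E_i'$ while the reflexivity hypothesis is on $F$, and the representing operators have $F'$ as their codomain.
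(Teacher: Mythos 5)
Your proposal is correct and is exactly the intended argument: the paper states the corollary without proof as an immediate consequence of Theorem \ref{mth} together with the isometric identification ${\cal L}_{q\tau(p)}={\cal L}_{\tau(p)}$ for duals of reflexive spaces recorded after Definition \ref{sdef}. You have correctly matched the hypotheses (bounded approximation property on the $E_i'$, reflexivity on $F$ so that the target $F'$ qualifies for that remark), so there is nothing to add.
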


\noindent{\bf Open problem.} Is it true that ${\cal L}_{\tau(p)}(E;F') = {\cal L}_{q\tau(p)}(E;F')$ for all Banach spaces $E$ and $F$?

\bigskip

\noindent
\begin{tabular}{l}
Geraldo Botelho\\
Faculdade de Matem\'atica\\
Universidade Federal de Uberl\^andia\\
38.400-902 -- Uberl\^andia -- Brazil\\
e-mail: botelho@ufu.br\end{tabular}
\hspace{20mm}
\begin{tabular}{l}
Ximena Mujica\\
Departamento de Matem\'atica\\
Universidade Federal do Paran\'a\\
81.531-980 -- Curitiba -- Brazil\\
e-mail: xmujica@ufpr.br
\end{tabular}

\end{document}